\pgfplotsset{compat=1.18} 
\newtheorem{theorem}{Theorem}[section]
\newtheorem{lemma}[theorem]{Lemma}
\newtheorem{proposition}[theorem]{Proposition}
\newtheorem{conjecture}[theorem]{Conjecture}
\theoremstyle{definition}
\newtheorem{remark}[theorem]{Remark}
\numberwithin{equation}{section}
\newcommand{\eps}{\varepsilon}
\newcommand{\Leg}[2]{\left(\frac{#1}{#2}\right)}
\newcommand{\piodd}{\pi_{\mathrm{odd}}}
\begin{document}

\title[First-passage times for character sums]{Average first-passage times for character sums}

\author[Q.~Tang]{Quanyu Tang}
\author[H.~Zhang]{Hao Zhang}

\address{School of Mathematics and Statistics, Xi'an Jiaotong University, Xi'an 710049, P. R. China}
\email{tang\_quanyu@163.com}

\address{School of Mathematics, Hunan University, Changsha 410082, P. R. China}
\email{zhanghaomath@hnu.edu.cn}

\subjclass[2020]{Primary 11L40.}

\keywords{Quadratic character sums, quadratic large sieve}

\begin{abstract}
Let $\varepsilon>0$ and, for an odd prime $p$, set
$$
S_\ell(p):=\sum_{n\le \ell}\left(\frac{n}{p}\right).
$$
Define the \emph{first-passage time}
$$
f_\varepsilon(p):=\min\{\ell\ge 1:\ S_\ell(p)<\varepsilon\ell\}.
$$
We prove that there exists a constant $c_\varepsilon>0$ such that, as $x\to\infty$,
$$
\sum_{p\le x} f_\varepsilon(p)\sim c_\varepsilon \frac{x}{\log x}.
$$
\end{abstract}

\maketitle

\section{Introduction}

A central question in number theory is to understand the behavior of the quadratic character sum
\[
S_\ell(p):=\sum_{n\le \ell}\Leg{n}{p},
\]
where $\Leg{\cdot}{p}$ denotes the Legendre symbol modulo an odd prime $p$.
Such sums encode fundamental information on the distribution of quadratic residues and non-residues
modulo $p$. In particular, bounds for $S_\ell(p)$ lead to results on the size of the least quadratic
non-residue modulo $p$; for a more detailed discussion and further background, see
\cite[Section~1]{HL}.

Let $\chi_p(n):=\Leg{n}{p}$. A basic heuristic models the values $\chi_p(n)$ as independent unbiased
$\pm 1$ signs, in which case $S_\ell(p)$ is typically of size $\asymp \sqrt{\ell}$ and, for any fixed
$\varepsilon>0$, the inequality $S_\ell(p)<\varepsilon \ell$ should hold for all sufficiently large $\ell$.
This motivates quantitative questions about the time at which the graph of $S_\ell(p)$ falls below
the linear barrier $\varepsilon\ell$.

In~\cite[Eq.~(80)]{Er65b}, Erd\H{o}s posed the following conjecture, which appears as Problem~\#981 on Bloom's
Erd\H{o}s Problems website~\cite{EP981}.\footnote{An earlier version of the Problem~\#981 page~\cite{EP981} misstated the question by
defining a \emph{first-passage} threshold (asking for the smallest $\ell$ with $S_\ell(p)<\varepsilon\ell$),
rather than the \emph{eventual-time} threshold stated below. The present paper studies and resolves that
earlier first-passage formulation.}

\begin{conjecture}[Erd\H{o}s's eventual-time problem]\label{conj:stronger}
Let $\varepsilon>0$, and for each odd prime $p$ let $F_\varepsilon(p)$ be the smallest integer such that
\begin{equation}\label{eq:intro:Feps}
S_\ell(p)<\varepsilon\ell\qquad\text{for every }\ell\ge F_\varepsilon(p).
\end{equation}
Then there exists a constant $C_\varepsilon>0$ such that
\[
\sum_{p \leq x}F_\varepsilon(p)\sim C_\varepsilon\,\frac{x}{\log x}.
\]
\end{conjecture}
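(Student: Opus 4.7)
I would begin by writing $F_\varepsilon(p) = 1 + L_\varepsilon(p)$, where $L_\varepsilon(p) := \max\{\ell \geq 1 : S_\ell(p) \geq \varepsilon \ell\}$ (with the convention $L_\varepsilon(p) = 0$ if the set is empty); this maximum is finite since Pólya--Vinogradov forces $\ell \leq \varepsilon^{-1}\sqrt{p}\log p$. Unfolding via indicator functions,
\[
\sum_{p \leq x} F_\varepsilon(p) \;=\; \pi(x) + \sum_{k \geq 1} N(k, x), \qquad N(k, x) := \#\{p \leq x : \exists\, \ell \geq k,\ S_\ell(p) \geq \varepsilon \ell\}.
\]
The target constant should be $C_\varepsilon = 1 + \sum_{k \geq 1} q_k$, where $q_k := \PP[\exists\, \ell \geq k : S_\ell^{\mathrm{rand}} \geq \varepsilon \ell]$ under the i.i.d.\ $\pm 1$ model; a Hoeffding bound gives $q_k \ll e^{-c_\varepsilon k}$, so the series converges.

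The plan is to introduce two slowly growing parameters $L_0, K_0 \to \infty$ and to split $\mathbf{1}[\exists\, \ell \geq k : S_\ell(p) \geq \varepsilon \ell] = \mathbf{1}[\exists\, \ell \in [k, K_0] : S_\ell(p) \geq \varepsilon \ell] + E_1(p)$, where $E_1(p) \leq \mathbf{1}[\exists\, \ell > K_0 : S_\ell(p) \geq \varepsilon \ell]$ is $k$-independent. The truncated indicator depends only on $(\Leg{n}{p})_{n \leq K_0}$, which by quadratic reciprocity is determined by $p \bmod M$ with $M = 8\prod_{q \leq K_0,\ q \text{ prime}} q \asymp e^{K_0}$. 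Siegel--Walfisz, unconditionally applicable when $K_0 \ll \log\log x$, then yields
\[
\#\{p \leq x : \exists\, \ell \in [k, K_0] : S_\ell(p) \geq \varepsilon \ell\} = \pi(x)\, q_k^{(K_0)} + o(\pi(x)/L_0),
\]
uniformly for $k \leq L_0$, where $q_k^{(K_0)} := \PP[\exists\, \ell \in [k, K_0] : S_\ell^{\mathrm{rand}} \geq \varepsilon \ell] \uparrow q_k$ as $K_0 \to \infty$. Summing over $k \leq L_0$ and sending $L_0, K_0 \to \infty$ produces $(C_\varepsilon - 1)\pi(x)(1+o(1))$ by monotone convergence.

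The remaining contributions --- the cutoff error $\sum_k E_1$ and the outer tail $\sum_{k > L_0} N(k, x)$ --- both reduce to controlling $\sum_p L_\varepsilon(p)\,\mathbf{1}[L_\varepsilon(p) > K_0]$. The natural tool is a high-moment estimate: expanding
\[
\sum_{p \leq x} S_\ell(p)^{2j} = \sum_{n_1, \ldots, n_{2j} \leq \ell} \sum_{p \leq x} \Leg{n_1 \cdots n_{2j}}{p},
\]
the diagonal (``$n_1 \cdots n_{2j}$ is a perfect square'') contributes $C_j \ell^j \pi(x)$, while the off-diagonal is bounded using Siegel--Walfisz on each real non-principal character $\Leg{N}{\cdot}$ of conductor $\lesssim \ell^{2j}$. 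Granting $\sum_p S_\ell(p)^{2j} \ll_j \ell^j \pi(x)$ on a suitable range of $\ell$, Chebyshev gives $\#\{p \leq x : S_\ell(p) \geq \varepsilon \ell\} \ll_j \pi(x)\, \varepsilon^{-2j}\, \ell^{-j}$; taking $j = 3$, the weighted tail $\sum_{\ell > K_0} \ell \cdot \#\{p : S_\ell \geq \varepsilon \ell\} \ll \pi(x)/K_0 = o(\pi(x))$, killing both error terms. For $\ell$ beyond the moment regime, Burgess forces $|S_\ell(p)| = o(\ell)$ pointwise whenever $\ell \gg p^{1/4+\delta}$, so no primes contribute.

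The main obstacle is calibrating these ranges compatibly. Unconditional equidistribution caps $K_0$ at $O(\log\log x)$, whereas the Siegel--Walfisz off-diagonal error in the $2j$-th moment is of order $\ell^{2j}\exp(-c\sqrt{\log x})$, which beats the diagonal $\ell^j$ only for $\ell \leq \exp(c_j\sqrt{\log x})$ --- leaving a genuine gap up to $p^{1/4+\delta}$ (where Burgess takes over) that plain Siegel--Walfisz does not close. The naïve $L^2$ quadratic large sieve yields only $\#\{p : S_\ell \geq \varepsilon\ell\} \ll \pi(x)/\ell$, whose weighted tail $\sum \ell \cdot \#\{\cdot\}$ diverges logarithmically, so \emph{genuine} higher-moment or maximal-type control of character sums over primes is indispensable. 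This is strictly harder than the first-passage case resolved by the abstract: the event $\{f_\varepsilon(p) > \ell\}$ depends on only the $\ell - 1$ Legendre symbols below $\ell$, whereas $\{F_\varepsilon(p) > \ell\}$ depends on the full sequence $(S_k(p))_{\ell \leq k \leq \varepsilon^{-1}\sqrt{p}\log p}$ whose length grows with $p$, making the tail analysis intrinsically more delicate.
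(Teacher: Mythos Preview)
First, note that the paper does not itself prove Conjecture~\ref{conj:stronger}; it attributes the result to Elliott~\cite{El69} and then proves the separate first-passage statement about $f_\varepsilon(p)$. So there is no in-paper proof of this statement to compare your proposal against.

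Your plan is well structured and you honestly isolate the real obstruction: you need the sixth-moment bound $\sum_{p\le x}S_\ell(p)^6\ll \ell^3\,\pi(x)$ uniformly up to a power of $x$, but your proposed route---expand and treat each off-diagonal character by Siegel--Walfisz---only reaches $\ell\le\exp(c\sqrt{\log x})$, while Burgess does not take over until $\ell\ge p^{1/4+\delta}$. This is a genuine gap, and you say so yourself. The missing idea is one you nearly touch on and then dismiss: the quadratic large sieve. Applying it with $a_n=1$ to $S_\ell(p)$ indeed gives only $P_\ell(x)\ll\ell^{-1}$, which is too weak; but the trick (used in the paper for $f_\varepsilon$, and the natural engine here as well) is to apply it to the \emph{cube}. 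Write $S_\ell(p)^3=\sum_{u\le\ell^3}c_u(\ell)\Leg{u}{p}$ with $c_u(\ell)=\#\{(a,b,c)\in[1,\ell]^3:abc=u\}$ and feed these coefficients into the Montgomery--Vaughan large sieve for real characters. This yields
\[
\sum_{p\le x}|S_\ell(p)|^6\ \ll\ \frac{x}{\log x}\sum_{\substack{u,v\le\ell^3\\ uv=\square}}c_u(\ell)c_v(\ell)\ \ll_\delta\ \frac{x}{\log x}\,\ell^{3+\delta}
\]
uniformly for $\ell\le x^{1/6-\kappa}$, hence $P_\ell(x)\ll\ell^{-3+\delta}$ in that range; beyond it one flips via quadratic reciprocity and uses Heath--Brown's large sieve over fundamental discriminants, with P\'olya--Vinogradov truncating the sum at $\ell\asymp\sqrt{x}\log x$. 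This replaces your termwise Siegel--Walfisz entirely and closes the range gap without Burgess. For $F_\varepsilon$ (as opposed to $f_\varepsilon$) you need the \emph{weighted} tail $\sum_\ell \ell\,P_\ell(x)$ rather than $\sum_\ell P_\ell(x)$, which costs one extra power of $\ell$ and makes the large-$\ell$ range tighter; one more power in the moment (apply the sieve to $S_\ell(p)^4$ rather than $S_\ell(p)^3$) absorbs this, so your assessment that the eventual-time problem is ``strictly harder'' overstates the difficulty---the same mechanism carries it.
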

This conjecture was proved by Elliott \cite{El69}.

Motivated by Conjecture~\ref{conj:stronger}, we consider the associated \emph{first-passage} threshold.  For $\varepsilon>0$ and each odd
prime $p$, define
\[
f_\varepsilon(p):=\min\{\ell\ge 1:\ S_\ell(p)<\varepsilon\ell\}.
\]
Clearly one has $f_\varepsilon(p)\le F_\varepsilon(p)$ for each $p$, since \eqref{eq:intro:Feps} forces
$S_{F_\varepsilon(p)}(p)<\varepsilon F_\varepsilon(p)$.
However, an asymptotic formula for $\sum_{p\le x}F_\varepsilon(p)$ does not by itself imply the corresponding
asymptotic for $\sum_{p\le x}f_\varepsilon(p)$.
Our main result establishes the expected asymptotic for $\sum_{p\le x}f_\varepsilon(p)$.

\begin{theorem}\label{thm:intro:first}
For every $\varepsilon>0$ there exists a constant $c_\varepsilon\in(0,\infty)$ such that, as $x\to\infty$,
\[
\sum_{p\le x} f_\varepsilon(p)\sim c_\varepsilon\,\frac{x}{\log x}.
\]
\end{theorem}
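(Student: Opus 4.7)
My plan is a dominated-convergence argument at the level of prime densities: I show that for each $k$ the level set $\{p:f_\varepsilon(p)=k\}$ has a Dirichlet density $d_k$, identify the candidate constant $c_\varepsilon:=\sum_{k\ge 1}k\,d_k$, and control the remaining tail via the domination $f_\varepsilon\le F_\varepsilon$ together with Elliott's theorem on $F_\varepsilon$.

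For each fixed $k\ge 1$, the event $f_\varepsilon(p)=k$ is specified by the sign conditions $S_\ell(p)\ge\varepsilon\ell$ for $\ell=1,\ldots,k-1$ and $S_k(p)<\varepsilon k$, and thus depends only on $\chi_p(1),\ldots,\chi_p(k)$. By multiplicativity, it suffices to know $\chi_p(q)$ for primes $q\le k$; by quadratic reciprocity (together with the supplementary law for $2$) each such $\chi_p(q)$ is determined by the residue class of $p$ modulo
$$
Q_k\ :=\ 8\prod_{\substack{q\le k\\ q\text{ odd prime}}}q.
$$
Hence $\{p:f_\varepsilon(p)=k\}$ is a finite union of primitive arithmetic progressions modulo $Q_k$, and Dirichlet's theorem on primes in progressions yields the density $d_k:=\lim_{x\to\infty}|\{p\le x:f_\varepsilon(p)=k\}|/\pi(x)$. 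For $\varepsilon\in(0,1)$, the primes $p\equiv\pm 3\pmod 8$ satisfy $\chi_p(2)=-1$, whence $S_2(p)=0<2\varepsilon$ and $f_\varepsilon(p)=2$; thus $d_2=1/2$ and $c_\varepsilon\ge 1>0$. Finiteness $c_\varepsilon<\infty$ then follows from $f_\varepsilon\le F_\varepsilon$ and Elliott's theorem via Fatou's lemma: $\sum_kk\,d_k\le\liminf_{x\to\infty}\sum_{p\le x}f_\varepsilon(p)/\pi(x)\le\liminf_{x\to\infty}\sum_{p\le x}F_\varepsilon(p)/\pi(x)=C_\varepsilon$.

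To pass from the existence of $c_\varepsilon$ to the asymptotic, for any $M\ge 1$ I split
$$
\sum_{p\le x}f_\varepsilon(p)\ =\ \sum_{k=1}^{M}k\,\big|\{p\le x:f_\varepsilon(p)=k\}\big|\ +\ T_M(x),\qquad T_M(x):=\sum_{\substack{p\le x\\ f_\varepsilon(p)>M}}f_\varepsilon(p).
$$
For fixed $M$, the first sum equals $\big(\sum_{k\le M}k\,d_k\big)\pi(x)+o_M(\pi(x))$ by the density computation above. For the tail, the inclusion $\{f_\varepsilon>M\}\subseteq\{F_\varepsilon>M\}$ combined with $f_\varepsilon\le F_\varepsilon$ gives
$$
T_M(x)\ \le\ \sum_{\substack{p\le x\\ F_\varepsilon(p)>M}}F_\varepsilon(p),
$$
and it remains to show that this right-hand side is $o_M(\pi(x))$ in the sense that $\limsup_{x\to\infty}\text{RHS}/\pi(x)\to 0$ as $M\to\infty$. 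Granting this, letting $x\to\infty$ and then $M\to\infty$ in the split above yields $\sum_{p\le x}f_\varepsilon(p)\sim c_\varepsilon\,x/\log x$.

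The main obstacle is this tail bound on $F_\varepsilon$. Elliott's theorem in its bare mean form $\sum_{p\le x}F_\varepsilon(p)\sim C_\varepsilon\,x/\log x$ does not supply it directly; however, his proof is really a density calculation that establishes, for each $k$, the existence of $D_k:=\lim_x|\{p\le x:F_\varepsilon(p)=k\}|/\pi(x)$ together with the identity $\sum_kk\,D_k=C_\varepsilon<\infty$. Consequently $\sum_{p\le x,\,F_\varepsilon(p)>M}F_\varepsilon(p)=(\sum_{k>M}k\,D_k)\pi(x)+o_M(\pi(x))$, and the series tail vanishes as $M\to\infty$. A self-contained alternative, more in keeping with the paper's advertised use of the quadratic large sieve, would be to prove the tail bound directly: $f_\varepsilon(p)>M$ forces $|S_M(p)|\ge\varepsilon M$, and combining the large sieve inequality $\sum_{p\le x}|S_M(p)|^2\ll(x+M)M$ (in a higher-moment or dyadic variant) with the P\'olya--Vinogradov truncation $f_\varepsilon(p)\ll_\varepsilon\sqrt p\,\log p$ should yield the required $o_M(\pi(x))$ bound on $T_M(x)$.
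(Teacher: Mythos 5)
The overall architecture of your argument — identify the limit density for each level of $f_\varepsilon$, form the candidate constant $c_\varepsilon$, and then pass to the asymptotic by controlling a uniform tail — is the same as the paper's. Your treatment of the finite‑dimensional part is fine: $\{f_\varepsilon(p)=k\}$ is determined by $\chi_p(q)$ for primes $q\le k$, hence by $p\bmod Q_k$, and Dirichlet/PNT in APs gives the densities $d_k$; your observation that $d_2=1/2$ and hence $c_\varepsilon\ge 1>0$ is correct for $\varepsilon\in(0,1]$.

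The genuine gap is in the uniform tail bound, which is the entire technical heart of the problem. You reduce it to the claim that Elliott's proof of $\sum_{p\le x}F_\varepsilon(p)\sim C_\varepsilon x/\log x$ actually establishes level‑set densities $D_k$ for $F_\varepsilon$ \emph{together with} the identity $\sum_k kD_k=C_\varepsilon$. That second part is exactly the dominated‑convergence/uniform‑integrability statement you need, so invoking it unproved is circular: the mere existence of $D_k$ plus Fatou gives only $\sum_k kD_k\le C_\varepsilon$, and turning $\le$ into $=$ is as hard as bounding the tail $T_M(x)$ directly. Moreover, the events $\{F_\varepsilon(p)=k\}$ depend on $S_\ell(p)$ for all $\ell\ge k-1$ (a range that grows with $p$, only cut off at $\sqrt p\log p$ by P\'olya–Vinogradov), so they are \emph{not} unions of arithmetic progressions modulo a fixed $Q_k$, and even the existence of $D_k$ requires real work; you cannot read it off the way you read off the existence of $d_k$. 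You would have to go into Elliott's 1969 paper and verify that it delivers both ingredients, and in the quantitative form you need.

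Your sketched ``self‑contained alternative'' is the right idea and is what the paper actually does, but as written it would not close the gap. A second‑moment large‑sieve bound $\sum_{p\le x}|S_M(p)|^2\ll(x+M)M$ only gives $\#\{p\le x:S_M(p)\ge\varepsilon M\}/\pi(x)\ll_\varepsilon 1/M$, which is \emph{not} summable in $M$ and so cannot produce $\limsup_x\sum_{m>M}a_m(x)\to 0$. The paper instead uses the \emph{sixth} moment: expanding $S_m(p)^3=\sum_{u\le m^3}c_u(m)\chi_p(u)$ and applying the Montgomery–Vaughan quadratic large sieve for $m\le x^{1/6-\kappa}$, together with a divisor‑function estimate for the coefficients, yields $P_m(x)\ll_{\varepsilon,\kappa,\delta}m^{-3+\delta}$, which is summable; the range $m>x^{1/6-\kappa}$ is handled separately by quadratic reciprocity, Heath–Brown's large sieve, and the P\'olya–Vinogradov cutoff $f_\varepsilon(p)\ll_\varepsilon\sqrt p\log p$. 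These moment and coefficient estimates are the missing core of your proof; with them in place your dominated‑convergence scaffolding would indeed give the theorem, but without them (or a verified citation to the internal structure of Elliott's proof) the argument is incomplete.
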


\subsection*{Proof outline}
Write the average first-passage time in terms of tail probabilities:
\[
\frac{1}{\pi_{\mathrm{odd}}(x)}\sum_{p\le x} f_\varepsilon(p)
=\sum_{m\ge 0} a_m(x),
\qquad
a_m(x):=\frac{1}{\pi_{\mathrm{odd}}(x)}\#\{p\le x:\ f_\varepsilon(p)>m\},
\]
where $\pi_{\mathrm{odd}}(x)=\#\{p\le x:\ p\ \text{is an odd prime}\}$.
For each fixed $m$, the event $\{f_\varepsilon(p)>m\}$ is determined by the finite vector
$\left(\chi_p(q)\right)_{2 \le q\le m}$, and the prime number theorem for Dirichlet characters gives
the pointwise limits $a_m(x)\to \widehat a_m$ as $x\to\infty$.
To pass from these pointwise limits to an asymptotic for the average, the key additional input is a uniform tail bound, which allows us to interchange the limit $x\to\infty$
with the infinite sum over $m$.
We obtain this by dominating $a_m(x)$ by the density of primes with $S_m(p)\ge \varepsilon m$ and controlling
this density via high-moment estimates for $S_m(p)$: for $m\le x^{1/6-\kappa}$ we use a sixth-moment bound
coming from the quadratic large sieve, while for larger $m$ we combine quadratic reciprocity,
Heath--Brown's large sieve, and the P\'olya--Vinogradov inequality.

\section{Preliminaries}\label{sec:Preliminaries}

\subsection{Notation and conventions}

Fix $\varepsilon>0$. The case $\varepsilon > 1$ is trivial since $S_1(p)=1<\varepsilon$ for every odd prime
$p$, hence $f_\varepsilon(p)=1$. Thus, throughout the paper we assume that $0<\varepsilon \leq 1$, with the convention $S_0(p):=0$.


We use Vinogradov's asymptotic notation. For functions \(f=f(n)\) and \(g=g(n)\), we write
\(f=O(g)\), \(f\ll g\), or \(g\gg f\) to mean that there exists a constant \(C>0\) such that
\(|f(n)|\le C g(n)\) for all sufficiently large \(n\).
We write \(f\asymp g\) to mean that \(f\ll g\) and \(g\ll f\), and we write
\(f=o(g)\) to mean that \(f(n)/g(n)\to 0\) as \(n\to\infty\).
If the implicit constant is allowed to depend on one or more parameters $z_1,\dots,z_r$, we indicate this by
writing $f\ll_{z_1,\dots,z_r} g$, $g\gg_{z_1,\dots,z_r} f$ or $f=O_{z_1,\dots,z_r}(g)$.
Finally, we write $f\sim g$ to mean that $f(n)/g(n)\to 1$ as $n\to\infty$.

Throughout the paper, the letter $p$ always denotes an odd prime.
We use the standard prime-counting function $\pi(x)=\#\{p\le x:\ p\ \text{prime}\}$, and we set
\[
\piodd(x):=\#\{p\le x:\ p\ \text{is an odd prime}\}=\pi(x)-1\qquad(x\ge 2).
\]
Accordingly, $\sum_{p\le x}$ denotes a sum over odd primes $p\le x$, and we normalize by $\piodd(x)$ when taking averages.
We denote by $\mathcal{F}(x)$ the set of fundamental discriminants $d$ such that $|d|\le x$.
We write $m=\square$ to indicate that $m$ is a perfect square.




\subsection{Two forms of the quadratic large sieve}

Next we present two forms of the quadratic large sieve. The first is a large sieve inequality for prime
discriminants, which is a special case of~\cite[Lemma~9]{MoVa79}; see also~\cite[Lemma~4.8]{HL}.

\begin{lemma}[\cite{MoVa79}]\label{lem:LS-prime}
Let $\kappa>0$ be a fixed small number. Let $x\ge 2$ and $2\le N\le x^{1/2-\kappa}$. Then for arbitrary
complex numbers $a_1,\dots,a_N$ we have
\[
\sum_{p\le x}\Bigl|\sum_{n\le N} a_n \Leg{n}{p}\Bigr|^2
\ll_\kappa \frac{x}{\log x}\sum_{\substack{m,n\le N\\ mn=\square}}|a_m a_n|.
\]
\end{lemma}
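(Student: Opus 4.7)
The plan is to expand the inner square and split the resulting double sum into a diagonal (perfect-square) part and an off-diagonal part, handling the diagonal via the prime number theorem and the off-diagonal by a quadratic large sieve for real characters.

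I would first expand
\[
\sum_{p\le x}\Bigl|\sum_{n\le N} a_n \Leg{n}{p}\Bigr|^{2} \;=\; \sum_{m,n\le N} a_m \overline{a_n}\, T(m,n),\qquad T(m,n):=\sum_{\substack{p\le x\\ p\nmid mn}}\Leg{mn}{p}.
\]
For pairs $(m,n)$ with $mn=\square$, the Legendre symbol $\Leg{mn}{p}=1$ whenever $p\nmid mn$, so $T(m,n)=\pi(x)-\omega(mn)=\pi(x)+O(\log N)$. Summing and using $\pi(x)\sim x/\log x$, the diagonal contribution is $\ll_{\kappa}\frac{x}{\log x}\sum_{mn=\square} |a_m a_n|$, which already matches the claimed right-hand side.

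For off-diagonal pairs ($mn\ne\square$), I would write $mn=c^{2}d$ with $d>1$ squarefree. By quadratic reciprocity, the map $p\mapsto \Leg{mn}{p}=\Leg{d}{p}$ coincides with a non-principal real primitive Dirichlet character $\chi_d$ of conductor at most $4d\le 4N^{2}$. Consequently, the off-diagonal contribution equals
\[
\sum_{\substack{m,n\le N\\ mn\ne\square}} a_m \overline{a_n}\sum_{p\le x}\chi_d(p) \;+\; O\Bigl(\log N\sum_{m,n\le N}|a_m a_n|\Bigr).
\]
Grouping terms by $k=mn$ and applying Cauchy--Schwarz, this reduces to a mean-square estimate of the form $\sum_{d\le 4N^{2}}^{\flat}\bigl|\sum_{p\le x}\chi_d(p)\bigr|^{2}$, which can be bounded by a quadratic large sieve for real characters (for instance the Heath--Brown or Jutila inequality). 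The hypothesis $N\le x^{1/2-\kappa}$ is chosen exactly so that the dual modulus $4N^{2}\le 4x^{1-2\kappa}$ is dominated by $x$, so the large-sieve output is controlled by its $x$-term and the off-diagonal contribution ends up of size $\ll_{\kappa}\frac{x}{\log x}\sum_n|a_n|^{2}$. Since $\sum_n|a_n|^{2}\le \sum_{mn=\square}|a_m a_n|$, the desired bound follows.

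The main obstacle is securing the sharper $\frac{x}{\log x}$-factor (rather than merely $x$) in the off-diagonal estimate: the unweighted Montgomery--Vaughan large sieve over all moduli $q\le Q$ produces $(Q^{2}+N)$ and, specialized to $q=p$ prime, would yield only an $x$-factor. The extra $\log x$-saving must come from exploiting the primality of the moduli. I would achieve this by weighting the outer prime sum by $\log p$, so that the Chebyshev sum $\sum_{p\le x}\log p\asymp x$ takes the role of $\pi(x)\log x$, and then removing the $\log p$ weight at the end by partial summation (or a Brun--Titchmarsh-type upper bound). This is the refinement carried out in~\cite{MoVa79}.
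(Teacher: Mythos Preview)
The paper does not prove this lemma: it is stated as a special case of \cite[Lemma~9]{MoVa79} (with a secondary reference to \cite[Lemma~4.8]{HL}) and then used as a black box. There is thus no in-paper argument to compare yours against, and your proposal has to be judged on its own.

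Your architecture is the right one---expand the square, handle the diagonal $mn=\square$ by the prime number theorem, and treat the off-diagonal via reciprocity and a mean-value estimate for real characters---and your last paragraph correctly isolates the crux (upgrading $x$ to $x/\log x$ by exploiting that the outer moduli are prime) together with the standard remedy of weighting by $\log p$ and removing the weight by partial summation.

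The step that does not close as written is the Cauchy--Schwarz reduction. Writing the off-diagonal as $\sum_{d>1}^{\flat}B_d\,\Sigma_d$ with $B_d=\sum_{\mathrm{sqf}(mn)=d}a_m\overline{a_n}$ (or the analogous grouping by $k=mn$) and $\Sigma_d=\sum_{p\le x}\chi_d(p)$, any Cauchy--Schwarz in the $d$- or $(m,n)$-variable leaves you with a factor of the shape $\bigl(\sum_d\nu(d)\,|\Sigma_d|^2\bigr)^{1/2}$, where $\nu(d)\ll N^{1+o(1)}d^{-1/2}$ counts the pairs with kernel $d$. Inserting the Heath--Brown/Jutila bound $\sum_{d\le D}^{\flat}|\Sigma_d|^2\ll (Dx)^{o(1)}(D+x)\pi(x)$ and summing dyadically over $d\le N^2\le x$ gives an off-diagonal of order $N^{1/2+o(1)}x(\log x)^{-1/2}\sum_n|a_n|^2$, which exceeds the target $(x/\log x)\sum_n|a_n|^2$ for every $N\ge2$. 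So the assertion ``the off-diagonal contribution ends up of size $\ll_\kappa(x/\log x)\sum_n|a_n|^2$'' is not justified by the sketch; in \cite{MoVa79} the off-diagonal is not dispatched by a single Cauchy--Schwarz over $d$, and the hypothesis $N\le x^{1/2-\kappa}$ enters more directly in controlling the weighted inner sums $\sum_{p\le x}(\log p)\Leg{mn}{p}$ before any averaging is applied.
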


The second form is due to Heath--Brown~\cite[Corollary~2]{HB95}; see also~\cite[Lemma~4.1]{La24}.

\begin{lemma}[\cite{HB95}]\label{lem:LS-HB}
Let $x,N\ge 2$. Then for arbitrary complex numbers $a_1,\dots,a_N$ and for any $\rho>0$ we have
\[
\sum_{d\in\mathcal{F}(x)}\Bigl|\sum_{n\le N} a_n \Leg{d}{n}\Bigr|^2
\ll_\rho (xN)^\rho (x+N)\sum_{\substack{m,n\le N\\ mn=\square}}|a_m a_n|.
\]Here $\Leg{d}{\cdot}$ denotes the Kronecker symbol modulo $|d|$.
\end{lemma}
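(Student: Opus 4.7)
The plan is to convert $\sum_{p\le x}f_\varepsilon(p)$ into a layer-cake sum of densities $a_m(x)$, to establish pointwise convergence of each $a_m(x)$ via the prime number theorem for Dirichlet characters, and then to interchange the limit $x\to\infty$ with the sum over $m$ by a dominated-convergence argument supplied by moment estimates on $S_m(p)$. Using $f_\varepsilon(p)=\sum_{m\ge 0}\mathbf{1}[f_\varepsilon(p)>m]$ gives
\[
\frac{1}{\piodd(x)}\sum_{p\le x}f_\varepsilon(p)=\sum_{m\ge 0}a_m(x).
\]
For each fixed $m$, the event $\{f_\varepsilon(p)>m\}=\bigcap_{\ell\le m}\{S_\ell(p)\ge\varepsilon\ell\}$ depends only on the Legendre symbols $(\chi_p(q))_{q\le m}$, hence only on the class of $p$ modulo $Q_m:=8\prod_{q\le m,\,q\ \text{odd prime}}q$, since $\chi_p(2)$ is determined by $p\bmod 8$ and each $\Leg{q}{p}$ with $q$ odd is determined by $p\bmod 4q$ via reciprocity. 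The prime number theorem for arithmetic progressions therefore yields $a_m(x)\to\widehat a_m\in[0,1]$ as $x\to\infty$.

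The main task is a bound $a_m(x)\le\Phi_m$ uniform in $x$ with $\sum_m\Phi_m<\infty$. Using $\{f_\varepsilon(p)>m\}\subseteq\{S_m(p)\ge\varepsilon m\}$ and Chebyshev in an even moment, I split on the size of $m$. For $m\le x^{1/6-\kappa}$, writing
\[
S_m(p)^{3}=\sum_{n\le m^{3}}b_n\chi_p(n),\qquad b_n=\#\bigl\{(n_1,n_2,n_3)\in[1,m]^3:\,n_1n_2n_3=n\bigr\},
\]
gives $S_m(p)^6=|\sum_n b_n\chi_p(n)|^2$, and Lemma~\ref{lem:LS-prime} with $N=m^3\le x^{1/2-\kappa'}$ bounds $\sum_{p\le x}S_m(p)^6$ by $\frac{x}{\log x}$ times the diagonal count of $6$-tuples in $[1,m]^6$ with square product. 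A standard divisor-type estimate bounds this count by $O(m^3(\log m)^{C})$, so Chebyshev yields $a_m(x)\ll_\varepsilon m^{-3}(\log m)^C$, which is summable.

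For $m>x^{1/6-\kappa}$, Lemma~\ref{lem:LS-prime} no longer applies. The Pólya--Vinogradov inequality $|S_m(p)|\ll\sqrt{p}\log p$ forces $\{S_m(p)\ge\varepsilon m\}=\emptyset$ once $m\ge C_\varepsilon\sqrt{x}\log x$, so only the window $x^{1/6-\kappa}<m<C_\varepsilon\sqrt{x}\log x$ remains. There, quadratic reciprocity allows me to rewrite, on each of the four classes of $p$ modulo $8$,
\[
\sum_{\substack{n\le m\\ n\ \text{odd}}}\chi_p(n)=\sum_{\substack{n\le m\\ n\ \text{odd}}}\epsilon_n\Leg{p^{*}}{n},
\]
where $p^{*}\in\{\pm p\}$ is a fundamental discriminant determined by $p\bmod 4$, the signs $\epsilon_n\in\{\pm 1\}$ depend only on $n\bmod 4$, and the even $n$ are handled by dyadic iteration at a logarithmic cost. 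Since $\{p^{*}:p\le x\text{ odd prime}\}\subset\mathcal{F}(x)$, taking a $2k$-th moment and expanding the $k$-th power as a coefficient sum of length $N=m^k$ reduces the problem to Lemma~\ref{lem:LS-HB}, whose diagonal is again $O(m^k(\log m)^C)$. Choosing $k$ large and $\rho$ small (so that $\rho(k+6)<k-1$), the factor $(x+m^k)(xm^k)^\rho$ is dominated by the Chebyshev savings $(\varepsilon m)^{-2k}$, and one obtains $a_m(x)\ll_{\varepsilon,\kappa}m^{-1-\delta}(\log m)^{C}$ for some $\delta>0$ throughout the window.

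Combining the two regimes furnishes a summable dominating sequence $\Phi_m$, and a Tannery-type passage to the limit gives
\[
\lim_{x\to\infty}\frac{1}{\piodd(x)}\sum_{p\le x}f_\varepsilon(p)=\sum_{m\ge 0}\widehat a_m=:c_\varepsilon,
\]
which is finite by the tail bound and positive because $\widehat a_0=1$. Multiplying by $\piodd(x)\sim x/\log x$ completes the proof. The main obstacle I anticipate is the large-$m$ step: carefully bookkeeping the reciprocity signs and power-of-two contributions so that Lemma~\ref{lem:LS-HB} applies in the clean form above, and then balancing the $(x+m^k)$ and $(xm^k)^\rho$ factors in Heath--Brown's inequality against the $(\varepsilon m)^{-2k}$ Chebyshev savings so that the resulting bound is uniform in $x$ and summable in $m$ across the full window $x^{1/6-\kappa}<m<C_\varepsilon\sqrt{x}\log x$.
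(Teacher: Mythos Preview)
Your proposal does not address the statement in question. Lemma~\ref{lem:LS-HB} is Heath--Brown's quadratic large sieve inequality, and the paper does not prove it at all: it is quoted from \cite{HB95} (see also \cite[Lemma~4.1]{La24}) as a black box. What you have written is instead a proof sketch for Theorem~\ref{thm:intro:first}, the main theorem of the paper, in which Lemma~\ref{lem:LS-HB} is merely one of the inputs.

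As a sketch of Theorem~\ref{thm:intro:first} your outline is broadly correct and follows the same architecture as the paper: layer-cake decomposition, pointwise limits $a_m(x)\to\widehat a_m$ from equidistribution, and a dominated-convergence argument via moment bounds on $S_m(p)$, splitting into the ranges $m\le x^{1/6-\kappa}$ and $m>x^{1/6-\kappa}$. There are two differences worth noting. First, the paper uses the sixth moment in \emph{both} ranges, not a variable $2k$-th moment; with $k=3$ the Heath--Brown factor $(x+m^3)(xm^3)^\rho$ already yields a power saving across the window $x^{1/6-\kappa}<m\ll\sqrt{x}\log x$, so taking $k$ large is unnecessary. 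Second, your concern about ``bookkeeping the reciprocity signs and power-of-two contributions'' is bypassed in the paper: setting $p^\ast=(-1)^{(p-1)/2}p$, one has $\Leg{u}{p}=\Leg{p^\ast}{u}$ for \emph{all} integers $u$ (even ones included) via the Kronecker symbol, so the inclusion $\{p^\ast:p\le x\}\subset\mathcal F(x)$ feeds directly into Lemma~\ref{lem:LS-HB} with no splitting modulo $8$ and no dyadic iteration.
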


\subsection{Cubic coefficients and a square-condition bound}

For an integer $\ell\ge 1$, define
\begin{equation}\label{eq:cu-def}
c_u(\ell):=\#\{(a,b,c)\in\{1,\dots,\ell\}^3:\ abc=u\}\qquad(1\le u\le \ell^3),
\end{equation}
and set
\begin{equation}
C_3(\ell):=\sum_{\substack{u,v\le \ell^3\\ uv=\square}} c_u(\ell)c_v(\ell).
\end{equation}

\begin{lemma}\label{lem:C3m-bound}
For every $\delta>0$ one has
\[
C_3(m)\ll_\delta m^{3+\delta}.
\]
\end{lemma}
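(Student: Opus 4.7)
My approach is to reinterpret $C_3(m)$ combinatorially and then invoke the standard divisor bound. Unfolding the definition of $c_u(m)$ in \eqref{eq:cu-def}, one sees that
\[
C_3(m) = \#\bigl\{(a_1,\ldots,a_6)\in\{1,\ldots,m\}^6 : a_1a_2a_3a_4a_5a_6=\square\bigr\},
\]
since $c_u(m)c_v(m)$ counts pairs of ordered triples in $\{1,\ldots,m\}^3$ with products $u$ and $v$, and the condition $uv=\square$ is equivalent to the product of the six entries being a perfect square.

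Next I would parametrise such $6$-tuples by $n:=\sqrt{a_1\cdots a_6}$. Since each $a_i\le m$, one has $n\le m^3$, and for each fixed integer $n\in\{1,\ldots,m^3\}$, the number of ordered $6$-tuples of positive integers with product equal to $n^2$ (dropping the upper-bound constraint $a_i\le m$) is exactly the generalised divisor function $d_6(n^2)$. Consequently
\[
C_3(m)\ \le\ \sum_{n\le m^3} d_6(n^2).
\]

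The last step is to apply the classical divisor bound $d_k(N)\ll_{k,\delta} N^{\delta}$ (valid for every fixed $k$ and $\delta>0$) with $k=6$ and $N=n^2$, which yields $d_6(n^2)\ll_{\delta} n^{\delta/4}$. Summing,
\[
C_3(m)\ \ll_\delta\ \sum_{n\le m^3} n^{\delta/4}\ \ll\ m^{3+\delta},
\]
as required. No real obstacle arises in the argument: the decisive observation is that the ``square-product'' constraint collapses six degrees of freedom to a single parameter $n$, and the divisor function contributes only the negligible factor $n^{o(1)}$. In fact the diagonal contribution $a_{i+3}=a_i$ ($i=1,2,3$) already gives $C_3(m)\gg m^3$, so the exponent $3+\delta$ is sharp up to the $\delta$, and there is no room (nor need) to sharpen the divisor bound beyond $n^{o(1)}$.
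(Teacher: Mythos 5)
Your proof is correct, and it takes a genuinely different and cleaner route than the paper. The paper keeps the sum in the form $\sum_{uv=\square} c_u(m)c_v(m)$, bounds the weights via $c_u(m)\le d_3(u)\le d(u)^2\ll u^{2\theta}$, and then unfolds the square condition as $u=ds^2$, $v=dt^2$ with $d$ squarefree, estimating the resulting double sum over $d$ and $s$ directly (picking up a harmless $\log m$). You instead collapse the whole expression at the outset into a single combinatorial count of $6$-tuples in $\{1,\ldots,m\}^6$ with square product, parametrize by $n=\sqrt{a_1\cdots a_6}\le m^3$, and bound the fibre over each $n$ by $d_6(n^2)\ll_\delta n^{\delta/4}$. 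Both arguments ultimately rest on the same input (the divisor bound $d_k(N)\ll_\epsilon N^\epsilon$), but your reparametrisation by the square root of the product is more conceptual: it makes it transparent that the square constraint reduces six free parameters to one, and it avoids the intermediate step of bounding $c_u(m)$ pointwise and then summing the $u=ds^2$, $v=dt^2$ decomposition. Your observation that the diagonal already gives $C_3(m)\gg m^3$ is a nice sanity check on sharpness (the paper does not mention it).
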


\begin{proof}

We use the trivial bound $c_u(m)\le d_3(u)$, where 
\[d_k(u)=\#\{(a_1,\dots,a_k)\in \mathbb{N}^k\ :a_1\dotsm a_k=u\}.\]
To bound $d_3(u)$ in terms of the usual divisor function $d(u)=d_2(u)$. Write
\[
u=\prod_{q\ \mathrm{prime}}q^{s_q}.
\]
Then
\[
d_3(u)
=\prod_{q\mid u}\#\{(e_1,e_2,e_3)\in\mathbb{N}^3:\ e_1+e_2+e_3=s_q\}
=\prod_{q\mid u}\binom{s_q+2}{2}.
\]
Since $\binom{s+2}{2}=\frac{(s+1)(s+2)}{2}\le (s+1)^2$ for every $s\ge0$, we obtain
\[
d_3(u)\le \prod_{q\mid u}(s_q+1)^2=d(u)^2\ll_{\theta} u^{2\theta}.
\]
The last inequality follows from a classical result that $d(n)\ll_{\theta} n^{\theta}$ (see e.g.\ \cite[Theorem 315]{HW}). Thus $c_u(m)\ll_\theta u^{2\theta}$.

The condition $uv=\square$ is equivalent to writing $u=ds^2$ and $v=dt^2$ with $d$ squarefree.
Therefore,
\[
C_3(m)\le \sum_{\substack{d\ \mathrm{squarefree}\\ d\le m^3}}
\Bigl(\sum_{ds^2\le m^3} c_{ds^2}(m)\Bigr)^2
\ll_\theta \sum_{d\le m^3}\Bigl(\sum_{s\le m^{3/2}/\sqrt d} (ds^2)^{2\theta}\Bigr)^2.
\]
The inner sum satisfies
\[
\sum_{s\le m^{3/2}/\sqrt d} (ds^2)^{2\theta}
= d^{2\theta} \sum_{s\le m^{3/2}/\sqrt d} s^{4\theta}
\leq d^{2\theta}\Bigl(\frac{m^{3/2}}{\sqrt d}\Bigr)^{1+4\theta}
= m^{\frac32(1+4\theta)}d^{-\frac{1}{2}}.
\]
Hence
\[
C_3(m)\ll_\theta \sum_{d\le m^3}\left( m^{\frac32(1+4\theta)} d^{-\frac{1}{2}} \right)^2
= m^{3+12\theta}\sum_{d\le m^3} d^{-1}
\ll_\theta m^{3+12\theta}\log m.
\]
Now choose $\theta=\delta/24$ and use $\log m\ll_\delta m^{\delta/2}$ to conclude $C_3(m)\ll_\delta m^{3+\delta}$.
\end{proof}

\begin{remark}
In fact, de la Bret\`eche--Kurlberg--Shparlinski~\cite{BKS21} prove a substantially
stronger result than Lemma~\ref{lem:C3m-bound}.
Expanding the definition shows that
\[
C_3(m)
=\#\Bigl\{(x_1,\dots,x_6)\in[1,m]^6:\ x_1x_2x_3x_4x_5x_6 \ \text{is a perfect square}\Bigr\}.
\]
Hence \(C_3(m)=N_6(m)\) in the notation of~\cite{BKS21}. Therefore, substituting
\(n=6\) into \cite[Corollary~2.4]{BKS21} yields
\[
C_3(m)\ll m^3(\log m)^{15}.
\]
\end{remark}

Next we give a uniform bound for $f_\eps(p)$ via the P\'olya--Vinogradov inequality.

\begin{lemma}\label{lem:PV-cutoff}
There exists an absolute constant $C>0$ such that for every odd prime $p$ and every $\ell\ge 1$,
\[
|S_\ell(p)| < C\sqrt{p}\log p.
\]
Consequently,
\[
f_\eps(p)\le \frac{C}{\eps}\sqrt{p}\log p
\]
for all odd primes $p$.
\end{lemma}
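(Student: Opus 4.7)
The plan is to derive the first assertion as an immediate application of the P\'olya--Vinogradov inequality and then extract the bound on $f_\eps(p)$ by a short contrapositive argument. Concretely: since $p$ is an odd prime, the Legendre symbol $n \mapsto \Leg{n}{p}$ is a primitive, non-principal Dirichlet character modulo $p$, so the classical P\'olya--Vinogradov inequality supplies an absolute constant $C>0$ such that
\[
|S_\ell(p)|=\Bigl|\sum_{n\le \ell}\Leg{n}{p}\Bigr|< C\sqrt{p}\log p
\]
uniformly in $\ell\ge 1$. This is exactly the first display of the lemma, with no further work required.

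For the consequence on $f_\eps(p)$, I would argue by contradiction. Suppose $f_\eps(p)>M$ for some integer $M\ge 1$. By the definition of the first-passage time, this forces $S_k(p)\ge \eps k$ for every $1\le k\le M$; in particular $\eps M\le S_M(p)$. Combining with the P\'olya--Vinogradov bound above gives $\eps M< C\sqrt{p}\log p$, i.e.\ $M<\tfrac{C}{\eps}\sqrt{p}\log p$. Contrapositively, any integer $M\ge \tfrac{C}{\eps}\sqrt{p}\log p$ must satisfy $f_\eps(p)\le M$; taking $M=\lceil \tfrac{C}{\eps}\sqrt{p}\log p\rceil$ and absorbing the rounding into the constant yields the stated inequality (note $\sqrt{p}\log p\ge 1$ for every odd prime $p$, so enlarging $C$ by a harmless factor is enough).

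There is no substantive obstacle: the whole argument is a two-line deduction from a classical estimate. The only thing worth being careful about is that $f_\eps(p)$ is defined as a minimum over integers, so one should phrase the implication in terms of an integer $M$ just below or above $\tfrac{C}{\eps}\sqrt{p}\log p$, but this is purely cosmetic. The role this lemma will play later is to give a hard, uniform cap of order $O_\eps(\sqrt{p}\log p)$ on the first-passage time, which is precisely what is needed to truncate the tail sum $\sum_{m\ge 0}a_m(x)$ in the proof outline and justify interchanging the limit $x\to\infty$ with the sum over $m$.
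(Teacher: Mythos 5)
Your argument is correct and follows essentially the same route as the paper: both rest on the P\'olya--Vinogradov inequality for the first display and then deduce the bound on $f_\eps(p)$ from it. The only minor divergence is that the paper cites P\'olya--Vinogradov in the form $\max_{1\le t\le q}|\sum_{n\le t}\chi(n)|\ll\sqrt q\log q$ and therefore adds an explicit step for $\ell>p$ (periodicity of $\chi_p$ together with $\sum_{n=1}^{p}\chi_p(n)=0$ gives $S_\ell(p)=S_{\ell\bmod p}(p)$), whereas you invoke a formulation already uniform in $\ell$; your extra care about rounding $\tfrac{C}{\eps}\sqrt p\log p$ to an integer and enlarging $C$ is a small point the paper leaves implicit.
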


\begin{proof}
The P\'olya--Vinogradov inequality (see, for example \cite[Theorem~9.18]{MV}), states that for any nonprincipal Dirichlet character $\chi$ modulo $q$,
\[
\max_{1\le t\le q}\Bigl|\sum_{n\le t}\chi(n)\Bigr|\ll \sqrt{q}\log q.
\]
Applying this to $\chi=\chi_p$ gives $|S_\ell(p)|\ll \sqrt{p}\log p$ for $1\le \ell\le p$.
For $\ell>p$, the character $\chi_p$ is periodic modulo $p$ and $\sum_{n=1}^{p}\chi_p(n)=0$, so
$S_\ell(p)=S_{\ell\bmod p}(p)$ and the same bound holds. Hence, there exists an absolute constant $C>0$ such that for every odd prime $p$ and every $\ell\ge 1$, $|S_\ell(p)|< C\sqrt{p}\log p$. For any $\ell \geq (C/\eps)\sqrt{p}\log p$, we have
$S_\ell(p)\le |S_\ell(p)|<\eps \ell$, so  $f_\eps(p)\le \frac{C}{\eps}\sqrt{p}\log p$.
\end{proof}

\subsection{A density parameter}

Now, we need to estimate the density of primes such that $S_\ell(p)\ge \eps \ell$. For this purpose, for $x\ge 2$ and each integer $\ell\ge 1$, we define
\begin{equation}
P_{\ell}(x):=\frac{1}{\piodd(x)}\#\{p\le x:\ S_\ell(p)\ge \eps \ell\}.
\end{equation}

We give the estimation of $P_{\ell}(x)$ through the sixth-moment of $S_{\ell}(p)$. 

\begin{lemma}\label{thm:boundofam}
Fix $\kappa\in (0,1/6)$ and $\delta\in (0,1)$. When $2 \le \ell \le x^{1/6-\kappa}$, we have the uniform bound
\begin{equation}\label{eq:am-smallm}
P_\ell(x)\ll_{\eps,\kappa,\delta} \ell^{-3+\delta}.
\end{equation}
When $\ell>x^{1/6-\kappa}$, we have the uniform bound
\begin{equation}\label{eq:am-HB}
P_\ell(x)\ll_{\eps,\rho,\delta} (\log x)\,x^{\rho-1}\,\ell^{3\rho-3+\delta}\,(x+\ell^3).
\end{equation}
\end{lemma}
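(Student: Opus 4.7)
The plan is to estimate $P_\ell(x)$ by applying Markov's inequality to the sixth moment of $S_\ell(p)$, and then to control that moment by the two large-sieve inequalities in Lemmas \ref{lem:LS-prime} and \ref{lem:LS-HB}. Since $S_\ell(p)\ge\eps\ell$ implies $S_\ell(p)^6\ge(\eps\ell)^6$,
\[
P_\ell(x)\le \frac{1}{\piodd(x)\,(\eps\ell)^6}\sum_{p\le x}S_\ell(p)^6,
\]
and the key move is to recognize this sixth moment as the second moment of a length-$\ell^3$ linear character sum: expanding $S_\ell(p)^3=\sum_{a,b,c\le\ell}\chi_p(abc)=\sum_{u\le\ell^3}c_u(\ell)\chi_p(u)$ and using the reality of $S_\ell(p)$ gives
\[
S_\ell(p)^6=\Bigl|\sum_{u\le\ell^3}c_u(\ell)\chi_p(u)\Bigr|^2.
\]

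For the small range $2\le\ell\le x^{1/6-\kappa}$, one has $\ell^3\le x^{1/2-3\kappa}$, so Lemma \ref{lem:LS-prime} applies directly with $N=\ell^3$ and coefficients $a_u=c_u(\ell)$. The resulting square-condition sum is exactly $C_3(\ell)$, which by Lemma \ref{lem:C3m-bound} is $\ll_\delta \ell^{3+\delta}$. Combining this with $\piodd(x)\asymp x/\log x$ yields the desired bound \eqref{eq:am-smallm}.

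For the large range $\ell>x^{1/6-\kappa}$ the length $\ell^3$ may exceed $x^{1/2}$, so Lemma \ref{lem:LS-prime} no longer applies and I would pivot to Heath--Brown's variant by flipping the roles of prime and integer via quadratic reciprocity. Setting $p^*:=(-1)^{(p-1)/2}p$, the integer $p^*$ is a fundamental discriminant of absolute value $p$, and the Kronecker-symbol form of reciprocity yields $\chi_p(u)=\Leg{p^*}{u}$ for every $u\ge 1$ (both sides vanishing when $p\mid u$). Since $p\mapsto p^*$ embeds the odd primes $p\le x$ into $\mathcal{F}(x)$, the non-negativity of $|\cdot|^2$ gives
\[
\sum_{p\le x}S_\ell(p)^6\le \sum_{d\in\mathcal{F}(x)}\Bigl|\sum_{u\le\ell^3}c_u(\ell)\Leg{d}{u}\Bigr|^2.
\]
Applying Lemma \ref{lem:LS-HB} with $N=\ell^3$, then Lemma \ref{lem:C3m-bound}, and finally dividing by $\piodd(x)\,(\eps\ell)^6$ delivers \eqref{eq:am-HB}.

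The main subtlety, while not technically deep, is the reciprocity step: one must identify $\chi_p$, originally defined as the Legendre symbol in the \emph{upper} argument, with a Kronecker symbol $\Leg{d}{\cdot}$ indexed by a fundamental discriminant $d$, uniformly in $u$---including for $u\le \ell^3$ that may be much larger than $p$ and in particular divisible by $p$. Once this identification is made and the prime sum is embedded into a sum over $\mathcal{F}(x)$ by positivity, the rest is a direct invocation of the cited large sieves together with the cubic-coefficient estimate of Lemma \ref{lem:C3m-bound}.
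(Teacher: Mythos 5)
Your proof is correct and follows essentially the same route as the paper: Markov's inequality on the sixth moment, the rewriting $S_\ell(p)^6=\bigl|\sum_{u\le\ell^3}c_u(\ell)\chi_p(u)\bigr|^2$, Lemma~\ref{lem:LS-prime} together with Lemma~\ref{lem:C3m-bound} in the small range, and quadratic reciprocity (via $p^*=(-1)^{(p-1)/2}p$) plus positivity to embed into $\mathcal{F}(x)$ and apply Lemma~\ref{lem:LS-HB} in the large range. The only difference is cosmetic: you spell out the Kronecker-symbol identification slightly more than the paper does.
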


\begin{proof}

Using the following trivial bound, we transform the problem to the estimation of sixth-moment of $S_{\ell}(p)$.
\begin{equation}\label{eq:am-bound-general}
P_{\ell}(x)\le \frac{1}{\eps^6 {\ell}^6\,\piodd(x)}\sum_{p\le x} |S_{\ell}(p)|^6.
\end{equation}By~\eqref{eq:cu-def}, we have the identity
\[
S_{\ell}(p)^3=\sum_{a,b,c\le {\ell}}\Leg{abc}{p}
=\sum_{u\le {\ell}^3} c_u(\ell) \Leg{u}{p}.
\]
Therefore
\begin{equation}\label{eq:Sm6-as-quadratic}
|S_{\ell}(p)|^6=\Bigl|\sum_{u\le {\ell}^3} c_u(\ell) \Leg{u}{p}\Bigr|^2.
\end{equation}

\smallskip
\noindent\emph{Case 1: $2 \leq {\ell}\le x^{1/6-\kappa}$.}
Then ${\ell}^3\le x^{1/2-3\kappa}$. Applying Lemma~\ref{lem:LS-prime} with $N={\ell}^3 \geq 2$, we obtain
\[
\sum_{p\le x} |S_{\ell}(p)|^6
=\sum_{p\le x}\Bigl|\sum_{u\le {\ell}^3} c_u(\ell) \Leg{u}{p}\Bigr|^2
\ll_\kappa \frac{x}{\log x}\sum_{\substack{u,v\le {\ell}^3\\ uv=\square}} c_u(\ell) c_v(\ell)
=\frac{x}{\log x} C_3({\ell}).
\]
By Lemma~\ref{lem:C3m-bound}, $C_3({\ell})\ll_\delta {\ell}^{3+\delta}$, hence
\begin{equation}\label{eq:Sm6prime}
\sum_{p\le x}|S_{\ell}(p)|^6\ll_{\kappa,\delta}\frac{x}{\log x} {\ell}^{3+\delta}.
\end{equation}
Since $\piodd(x)\asymp x/\log x$, inserting \eqref{eq:Sm6prime} into \eqref{eq:am-bound-general} yields
\eqref{eq:am-smallm}.

\smallskip
\noindent\emph{Case 2: ${\ell}>x^{1/6-\kappa}$.}
Let $\rho>0$ be a small parameter. For an odd prime $p$, define $p^\ast:=(-1)^{(p-1)/2}p$.
Quadratic reciprocity implies $\Leg{u}{p}=\Leg{p^\ast}{u}$ for all integers $u$.
Thus
\[
\sum_{p\le x}|S_{\ell}(p)|^6
=\sum_{p\le x}\Bigl|\sum_{u\le {\ell}^3} c_u(\ell) \Leg{p^\ast}{u}\Bigr|^2
\le \sum_{d\in\mathcal{F}(x)}\Bigl|\sum_{u\le {\ell}^3} c_u(\ell) \Leg{d}{u}\Bigr|^2.
\]
Applying Lemma~\ref{lem:LS-HB} with $N={\ell}^3$, we obtain
\begin{align}
\sum_{p\le x}|S_{\ell}(p)|^6
&\ll_\rho (x {\ell}^3)^\rho (x+{\ell}^3)\sum_{\substack{u,v\le {\ell}^3\\ uv=\square}} c_u(\ell) c_v(\ell)
=(x {\ell}^3)^\rho (x+{\ell}^3)\,C_3({\ell})\nonumber\\
&\ll_{\rho,\delta} (x {\ell}^3)^\rho (x+{\ell}^3) {\ell}^{3+\delta}.
\label{eq:Sm6HB}
\end{align}
Combining \eqref{eq:Sm6HB} with \eqref{eq:am-bound-general} and $\piodd(x)\asymp x/\log x$ gives \eqref{eq:am-HB}.
\end{proof}

As a consequence, we obtain a uniform upper bound for $P_{\ell}(x)$.

\begin{theorem}\label{lem:Px-large-o1}
Fix $\kappa\in (0,\frac{1}{6})$. Then for any positive real number $\beta<\frac{1}{3}-2\kappa$, we have 
\begin{equation}\label{eq:Px-power-saving-claim}
\sum_{\ell> x^{1/6-\kappa}} P_{\ell}(x)\ \ll_{\varepsilon,\kappa,\beta}\ x^{-\beta}.
\end{equation}
\end{theorem}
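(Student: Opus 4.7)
\medskip
\noindent\textbf{Proof proposal.}
The plan is to split the range $\ell>x^{1/6-\kappa}$ into three regimes and apply the two available bounds on $P_\ell(x)$, namely the Heath--Brown estimate \eqref{eq:am-HB} and the P\'olya--Vinogradov cutoff from Lemma~\ref{lem:PV-cutoff}. First I would observe that if $\ell> (C/\eps)\sqrt{x}\log x$, then by Lemma~\ref{lem:PV-cutoff} every odd prime $p\le x$ satisfies $|S_\ell(p)|<\eps\ell$, so $P_\ell(x)=0$ and this tail contributes nothing. Thus it suffices to bound $\sum_{x^{1/6-\kappa}<\ell\le C'\sqrt{x}\log x}P_\ell(x)$.

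In the remaining range I would dyadically distinguish $x^{1/6-\kappa}<\ell\le x^{1/3}$ from $x^{1/3}<\ell\le C'\sqrt{x}\log x$, so as to simplify the factor $(x+\ell^3)$ appearing in \eqref{eq:am-HB}. In the first regime, $x+\ell^3\ll x$, and \eqref{eq:am-HB} gives
\[
P_\ell(x)\ll_{\eps,\rho,\delta}(\log x)\,x^{\rho}\,\ell^{3\rho-3+\delta}.
\]
Summing the $\ell^{3\rho-3+\delta}$ factor over $\ell>x^{1/6-\kappa}$ (a convergent sum for $3\rho+\delta<2$) contributes a factor of order $x^{(1/6-\kappa)(3\rho-2+\delta)}$, so the total in this regime is bounded by
\[
(\log x)\,x^{-1/3+2\kappa+\rho(1+3(1/6-\kappa))+\delta(1/6-\kappa)}.
\]
In the second regime, $x+\ell^3\ll \ell^3$, and the same estimate becomes $P_\ell(x)\ll (\log x)x^{\rho-1}\ell^{3\rho+\delta}$. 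Summing $\ell^{3\rho+\delta}$ up to $C'\sqrt{x}\log x$ yields at most $x^{-1/2+5\rho/2+\delta/2}$ (up to logs). This contribution is smaller than that of the first regime for small $\rho,\delta$, so the first regime dominates.

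Adding the two contributions, I obtain an overall bound of the form $x^{-(1/3-2\kappa)+E(\rho,\delta)}$ where $E(\rho,\delta)\to 0$ as $\rho,\delta\to 0$. Given any $\beta<\frac13-2\kappa$, it then suffices to choose $\rho,\delta>0$ small enough that $E(\rho,\delta)\le \frac13-2\kappa-\beta$, absorbing the $\log x$ factor into a slightly larger $\delta$; the implicit constant is then allowed to depend on $\eps,\kappa,\beta$ through this choice. I do not expect a serious obstacle: the main subtlety is just to pick the two cutoffs $\ell\sim x^{1/3}$ and $\ell\sim\sqrt{x}\log x$ in the right order, and to verify that the geometric sums in $\ell$ really are dominated at the lower endpoint $\ell\sim x^{1/6-\kappa}$ rather than at the upper endpoint, which forces the choice $3\rho+\delta<2$ made above.
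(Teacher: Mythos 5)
Your proposal is correct and follows essentially the same route as the paper's proof: discard $\ell>(C/\varepsilon)\sqrt{x}\log x$ via the P\'olya--Vinogradov cutoff, split the remaining range at $\ell=x^{1/3}$ to simplify $x+\ell^3$, apply the Heath--Brown bound \eqref{eq:am-HB} in each piece, observe that the Range~I contribution (order $x^{-(1/3-2\kappa)+o(1)}$) dominates the Range~II contribution (order $x^{-1/2+o(1)}$), and then choose $\rho,\delta$ small depending on $\kappa,\beta$. The only cosmetic difference is that the paper imposes the two explicit inequalities on $\rho,\delta$ directly rather than phrasing it via an error term $E(\rho,\delta)\to 0$, and it absorbs the logarithms into $x^{o(1)}$ at the end rather than into $\delta$.
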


\begin{proof}
    Let $C>0$ be the absolute constant from Lemma~\ref{lem:PV-cutoff}. For primes $p\le x$ and integers $\ell \geq(C/\eps)\sqrt{x}\log x$, Lemma~\ref{lem:PV-cutoff} gives
$|S_\ell(p)|< C\sqrt{x}\log x \leq \varepsilon\ell$, hence $P_{\ell}(x)=0$. Thus
\[
\sum_{\ell> x^{1/6-\kappa}} P_{\ell}(x)=\sum_{x^{1/6-\kappa}<\ell\le (C/\eps)\sqrt{x}\log x} P_{\ell}(x).
\]For any $\ell\ge x^{1/6-\kappa}$, \eqref{eq:am-HB} yields the bound

\begin{equation}\label{eq:Px-HB-pointwise-UI}
P_{\ell}(x)\ll_{\varepsilon,\rho,\delta}
(\log x)\,x^{\rho-1}\,\ell^{3\rho-3+\delta}\,(x+\ell^3)
\qquad(\ell\ge 2).
\end{equation}

Split the summation at $\ell=x^{1/3}$.
\smallskip

\noindent\emph{Range I: $x^{1/6-\kappa}<\ell\le x^{1/3}$.}
Here $x+\ell^3\le 2x$, so \eqref{eq:Px-HB-pointwise-UI} gives
\[
P_{\ell}(x)\ll_{\varepsilon,\rho,\delta} (\log x)\,x^\rho\,\ell^{3\rho-3+\delta}.
\]
Assuming $3\rho-2+\delta<0$, we obtain
\[
\sum_{x^{1/6-\kappa}<\ell\le x^{1/3}}P_{\ell}(x)
\ll_{\varepsilon,\rho,\delta} (\log x)\,x^\rho \sum_{\ell>x^{1/6-\kappa}}\ell^{3\rho-3+\delta}
\ll_{\varepsilon,\rho,\delta} (\log x)\,x^{\,\rho+(1/6-\kappa)(3\rho-2+\delta)}.
\]

\noindent\emph{Range II: $x^{1/3}<\ell\le \frac{C}{\eps}\sqrt{x}\log x$.}
Here $x+\ell^3 \leq 2\ell^3$, so \eqref{eq:Px-HB-pointwise-UI} gives
\[
P_{\ell}(x)\ll_{\varepsilon,\rho,\delta} (\log x)\,x^{\rho-1}\,\ell^{3\rho+\delta}.
\]
Therefore,
\[
\sum_{x^{1/3}<\ell\le \frac{C}{\eps}\sqrt{x}\log x}P_{\ell}(x)
\ll_{\varepsilon,\rho,\delta} (\log x)\,x^{\rho-1}\,(\sqrt{x}\log x)^{1+3\rho+\delta}
\ll_{\varepsilon,\rho,\delta} x^{-1/2+(5/2)\rho+\delta/2}\,(\log x)^{2+3\rho+\delta}.
\]

Now choose $\rho,\delta>0$ sufficiently small (depending on $\kappa,\beta$) so that
\[
\rho+(1/6-\kappa)(3\rho-2+\delta)< -\beta
\qquad\text{and}\qquad
-1/2+(5/2)\rho+\delta/2< -\beta.
\]
This is possible since $\beta<\frac{1}{3}-2\kappa$.
Absorbing the logarithmic factors into $x^{o(1)}$ completes the proof of \eqref{eq:Px-power-saving-claim}.
\end{proof}

\section{Proof of Theorem~\ref{thm:intro:first}}\label{sec:main1}

In this section, we give the proof of Theorem~\ref{thm:intro:first}. We begin with some notations. For integers $m\ge 0$, we define
\[
N_m(x):=\#\{p\le x:\ f_{\varepsilon}(p)>m\}
\qquad \text{and} \qquad
a_m(x):=\frac{N_m(x)}{\piodd(x)}.
\]
Using the double counting trick, we have
\begin{equation}\label{eq:avgFx}
\frac{1}{\piodd(x)}\sum_{p\le x} f_\eps(p)=\sum_{m\ge 0} a_m(x).
\end{equation}

\subsection{Finite-dimensional equidistribution and limiting tail probabilities}
The identity~\eqref{eq:avgFx} shows that it suffices to understand the tail probabilities $a_m(x)$.
For fixed $m$, the condition $f_\varepsilon(p)>m$ only depends on finitely many Legendre symbols
$\chi_p(q)$ with $q\le m$, so we begin by recording their equidistribution.


\begin{lemma}\label{lem:equidistribution}
Let $q_1,\dots,q_r$ be distinct primes and let $\eta_1,\dots,\eta_r\in\{\pm 1\}$. Then
\[
\#\Bigl\{p\le x:\ p\notin\{q_1,\dots,q_r\},\ \chi_p(q_i)=\eta_i\ \ (1\le i\le r)\Bigr\}
=2^{-r}\pi(x)+o(\pi(x)).
\]
\end{lemma}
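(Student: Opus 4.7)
The plan is to rewrite each condition $\chi_p(q_i)=\eta_i$ as a condition on $p$ modulo a fixed integer, and then apply the prime number theorem in arithmetic progressions (equivalently, the PNT for nontrivial Dirichlet characters) together with character orthogonality.

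First I would use quadratic reciprocity and the supplementary law to express $p\mapsto \chi_p(q_i)=\Leg{q_i}{p}$ as a real, nontrivial Dirichlet character $\psi_{q_i}$ in the variable $p$. Concretely, for odd $q_i$ one has $\Leg{q_i}{p}=(-1)^{\frac{p-1}{2}\cdot\frac{q_i-1}{2}}\Leg{p}{q_i}$, which is a nontrivial character modulo $q_i$ (if $q_i\equiv 1\pmod 4$) or modulo $4q_i$ (if $q_i\equiv 3\pmod 4$); for $q_i=2$, $\Leg{2}{p}$ is the nontrivial real character modulo $8$. In every case the conductor of $\psi_{q_i}$ is supported on the single prime $q_i$ (with an extra factor of $8$ allowed).

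Next I would observe that $\psi_{q_1},\dots,\psi_{q_r}$ are $\mathbb{F}_2$-independent in the character group, because each $\psi_{q_i}$ is ramified at a prime that none of the others is ramified at (together with at most the prime $2$). Consequently, for every nonzero $(\epsilon_1,\dots,\epsilon_r)\in\{0,1\}^r$, the product $\psi_{q_1}^{\epsilon_1}\cdots\psi_{q_r}^{\epsilon_r}$ is itself a nontrivial Dirichlet character (of modulus dividing $8q_1\cdots q_r$). Using the orthogonality identity
\[
\prod_{i=1}^r \frac{1+\eta_i\psi_{q_i}(p)}{2}
=\frac{1}{2^r}\sum_{(\epsilon_1,\dots,\epsilon_r)\in\{0,1\}^r}\Bigl(\prod_{i=1}^r \eta_i^{\epsilon_i}\Bigr)\prod_{i=1}^r\psi_{q_i}(p)^{\epsilon_i},
\]
summing over odd primes $p\le x$ with $p\notin\{q_1,\dots,q_r\}$ yields the count
\[
\frac{1}{2^r}\pi(x)+\frac{1}{2^r}\sum_{\epsilon\neq 0}\Bigl(\prod_i\eta_i^{\epsilon_i}\Bigr)\sum_{p\le x}\psi^{\epsilon}(p)+O(r),
\]
where $\psi^{\epsilon}:=\prod_i\psi_{q_i}^{\epsilon_i}$ is nontrivial.

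Finally, for each nontrivial $\psi^\epsilon$ the PNT for Dirichlet characters (or equivalently PNT in arithmetic progressions modulo $8q_1\cdots q_r$) gives $\sum_{p\le x}\psi^\epsilon(p)=o(\pi(x))$, and summing over the finitely many $\epsilon\neq 0$ shows this contribution is $o(\pi(x))$; the excluded set $\{q_1,\dots,q_r\}$ contributes $O(1)$. Combining yields the claimed asymptotic $2^{-r}\pi(x)+o(\pi(x))$. There is no serious obstacle here, since $r$, the $q_i$, and the $\eta_i$ are all fixed; the only subtlety worth emphasizing is confirming the $\mathbb{F}_2$-independence of the $\psi_{q_i}$, which is what guarantees that none of the $2^r-1$ nontrivial product characters is principal so that the PNT error bound can be applied to each of them.
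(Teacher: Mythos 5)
Your proof is correct and takes essentially the same approach as the paper: expand the indicator $\prod_i \frac{1+\eta_i\chi_p(q_i)}{2}$ by orthogonality and apply the prime number theorem for Dirichlet characters to each nonempty product. The only difference is that you explicitly carry out the quadratic-reciprocity conversion and justify nonprincipality via $\mathbb{F}_2$-independence of the $\psi_{q_i}$, whereas the paper simply asserts that $p\mapsto\chi_p(q_S)$ is a nonprincipal character for $S\neq\emptyset$; your extra detail is a welcome clarification of that step.
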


\begin{proof}
For odd primes $p\notin\{q_1,\dots,q_r\}$ we define
\[
J(p):=\prod_{i=1}^r\frac{1+\eta_i \cdot \chi_p(q_i)}{2}.
\]
It is easy to see that $J(p)=1$ if $\chi_p(q_i)=\eta_i$ for all $i$ and $J(p)=0$ otherwise.
Expanding the product gives
\[
J(p)=2^{-r}\sum_{S\subseteq\{1,\dots,r\}}\Bigl(\prod_{i\in S}\eta_i\Bigr)\chi_p(q_S),
\]
where $q_S=\prod_{i\in S}q_i$. For $S=\emptyset$ the summand is $1$. For $S\neq\emptyset$, the map
$p\mapsto \chi_p(q_S)$ is a nonprincipal quadratic Dirichlet character in the
prime variable $p$. By the prime number
theorem for Dirichlet characters \cite[Corollary~5.29]{IK}, we have
\[
\sum_{p\le x}\chi_p(q_S)=o(\pi(x)).
\]
Summing $J(p)$ over primes $p\le x$ and discarding the finitely many primes $p\in\{q_1,\dots,q_r\}$ gives
\[
\sum_{p\le x} J(p)=2^{-r}\pi(x)+o(\pi(x)),
\]
which is exactly the desired counting statement.
\end{proof}


Fix $m\ge 1$, and list the primes $\le m$ as
\[
q_1<q_2<\cdots<q_{\pi(m)}\qquad(\text{so } q_1=2 \text{ when } m\ge 2).
\]
For an odd prime $p>m$ define
\[
v_m(p):=\left(\chi_p(q_1),\chi_p(q_2),\dots,\chi_p(q_{\pi(m)})\right)\in\{\pm1\}^{\pi(m)}.
\]

\begin{lemma}\label{lem:qm-exists}
Fix $m\ge 1$. Let
\[
A_m=\Bigl\{\sigma\in \{\pm 1\}^{\pi(m)}:\ v_m(p)=\sigma \text{ and } f_{\varepsilon}(p)>m \text{ for some odd prime } p>m\Bigr\}.
\]
Then the limit
\begin{equation}\label{eq:qm-def}
\widehat{a}_m:=\lim_{x\to\infty}a_m(x)
\end{equation}
exists and equals $\frac{|A_m|}{2^{\pi(m)}}$.
\end{lemma}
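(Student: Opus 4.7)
The plan is to reduce the claim to a finite-dimensional counting statement and then invoke Lemma~\ref{lem:equidistribution}. First I would observe that the event $\{f_\varepsilon(p)>m\}$ is precisely the event that $S_\ell(p)\ge\varepsilon\ell$ for every $1\le\ell\le m$. Since $\chi_p$ is completely multiplicative and every prime factor of any $n\le m$ lies in $\{q_1,\dots,q_{\pi(m)}\}$, each $\chi_p(n)$ with $n\le m$ is a product of the $\chi_p(q_i)$. Consequently, whenever $p>m$ (so that $p\notin\{q_1,\dots,q_{\pi(m)}\}$ and none of the $\chi_p(q_i)$ vanish), the partial sums $S_1(p),\dots,S_m(p)$ are completely determined by the sign vector $v_m(p)\in\{\pm1\}^{\pi(m)}$, and in particular $\mathbf{1}[f_\varepsilon(p)>m]$ is a function of $v_m(p)$ alone.

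To make this dependence explicit, I would introduce a ``symbolic'' partial sum: for each $\sigma=(\sigma_1,\dots,\sigma_{\pi(m)})\in\{\pm1\}^{\pi(m)}$, set $T_\sigma(\ell):=\sum_{n\le\ell}\prod_{i=1}^{\pi(m)}\sigma_i^{v_{q_i}(n)}$, where $v_{q_i}(n)$ denotes the $q_i$-adic valuation of $n$. Define $A_m':=\{\sigma:\ T_\sigma(\ell)\ge\varepsilon\ell\ \text{for all}\ 1\le\ell\le m\}$. For $p>m$ we have the identity $S_\ell(p)=T_{v_m(p)}(\ell)$, so $f_\varepsilon(p)>m$ iff $v_m(p)\in A_m'$. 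Moreover, Lemma~\ref{lem:equidistribution} guarantees that every sign pattern $\sigma\in\{\pm1\}^{\pi(m)}$ is realized by infinitely many primes $p>m$; hence $A_m'$ coincides with the set $A_m$ from the statement.

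With these reductions, I would write $N_m(x)=\sum_{\sigma\in A_m}\#\{p\le x:\ p>m,\ v_m(p)=\sigma\}+O_m(1)$, where the $O_m(1)$ absorbs the at most $\pi(m)$ primes $p\le m$. Applying Lemma~\ref{lem:equidistribution} to each term on the right (with $(\eta_1,\dots,\eta_{\pi(m)})=\sigma$) gives $\#\{p\le x:\ p>m,\ v_m(p)=\sigma\}=2^{-\pi(m)}\pi(x)+o(\pi(x))$ as $x\to\infty$. Since $A_m$ is finite (and $m$ is fixed), summing yields $N_m(x)=2^{-\pi(m)}|A_m|\pi(x)+o(\pi(x))$. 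Dividing by $\piodd(x)=\pi(x)-1\sim\pi(x)$ then gives $a_m(x)\to|A_m|/2^{\pi(m)}$, as claimed.

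There is no real obstacle here; the crux is the observation that, for $p>m$, the event $\{f_\varepsilon(p)>m\}$ depends only on the finite-dimensional vector $v_m(p)$, after which everything reduces to Lemma~\ref{lem:equidistribution}. The only points requiring care are that $m$ is held fixed throughout (so implicit constants and $o$-terms may depend on $m$), and that the bounded contribution of primes $p\le m$ is safely absorbed into the $o(\pi(x))$ error. This argument of course gives no uniformity in $m$, which is exactly why the more delicate uniform tail bounds from Theorem~\ref{lem:Px-large-o1} are needed for the subsequent interchange of limits leading to Theorem~\ref{thm:intro:first}.
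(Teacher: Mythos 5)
Your proposal is correct and takes essentially the same route as the paper: both arguments observe that, for $p>m$, complete multiplicativity of $\chi_p$ makes the event $\{f_\varepsilon(p)>m\}$ a function of the sign vector $v_m(p)$, then decompose $N_m(x)$ over sign patterns and apply Lemma~\ref{lem:equidistribution} term by term. The only cosmetic difference is that you make the dependence explicit via the symbolic sums $T_\sigma(\ell)$ and an auxiliary set $A_m'$, whereas the paper achieves the same by comparing two primes $p,p'>m$ with $v_m(p)=v_m(p')$ directly.
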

\begin{proof}
We first show that for every odd prime $p>m$,
\[
f_{\varepsilon}(p)>m\iff v_m(p)\in A_m.
\]
Indeed, if $p,p'>m$ are odd primes with $v_m(p)=v_m(p')$, then $\chi_p(q)=\chi_{p'}(q)$ for every prime $q\le m$.
By complete multiplicativity, this implies $\chi_p(n)=\chi_{p'}(n)$ for all integers $1\le n\le m$, and hence
$S_k(p)=S_k(p')$ for every $k\le m$. In particular, $f_{\varepsilon}(p)>m$ if and only if $f_{\varepsilon}(p')>m$.

Now if $v_m(p)\in A_m$, then by definition of $A_m$, we see that there exists an odd prime $p'>m$ such that $v_m(p')=v_m(p)$ and $f_{\varepsilon}(p')>m$. By the discussion above, we have $f_{\varepsilon}(p)>m$. Conversely, if $f_{\varepsilon}(p)>m$, then $v_m(p)\in A_m$ by definition.

Therefore,
\[
N_m(x)=\sum_{\sigma\in A_m}\#\{p\le x:\ p>m,\ v_m(p)=\sigma\}+O(1),
\]
where the $O(1)$ term accounts for the finitely many primes $p\le m$.
Applying Lemma~\ref{lem:equidistribution}  to each fixed
$\sigma\in\{\pm1\}^{\pi(m)}$ yields
\[
\#\{p\le x:\ p>m,\ v_m(p)=\sigma\}
=\left(2^{-\pi(m)}+o(1)\right)\pi(x).
\]
Summing over $\sigma\in A_{m}$ and dividing by $\piodd(x)$ gives
\[
a_{m}(x)=\frac{|A_{m}|}{2^{\pi(m)}}+o(1),
\]
so the limit exists and equals $|A_{m}|/2^{\pi(m)}$.
\end{proof}

\subsection{Uniform tail bounds and completion of the proof}

\begin{proposition}\label{prop:UI}
We have
\[
\lim_{M\to\infty}\ \limsup_{x\to\infty}\ \sum_{m>M} a_m(x)=0.
\]
\end{proposition}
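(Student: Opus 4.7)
The plan is to dominate $a_m(x)$ by the density $P_m(x)$ introduced in Section~\ref{sec:Preliminaries} and then apply the two regimes of Lemma~\ref{thm:boundofam} (together with Theorem~\ref{lem:Px-large-o1}) on either side of a threshold $x^{1/6-\kappa}$. The bridging observation is that the event $\{f_\varepsilon(p)>m\}$ means $S_\ell(p)\ge \varepsilon\ell$ for \emph{every} $1\le \ell\le m$, so in particular $S_m(p)\ge \varepsilon m$. Hence for every $m\ge 1$,
\[
a_m(x)\ \le\ P_m(x).
\]

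With this in hand, I would fix some $\kappa\in(0,1/6)$ once and for all, and split the tail at $m_0=m_0(x):=x^{1/6-\kappa}$:
\[
\sum_{m>M}a_m(x)\ \le\ \sum_{M<m\le m_0}P_m(x)\ +\ \sum_{m>m_0}P_m(x).
\]
For the high range I would invoke Theorem~\ref{lem:Px-large-o1}, which furnishes $\beta>0$ (say any $\beta<\frac13-2\kappa$) with
\[
\sum_{m>m_0}P_m(x)\ \ll_{\varepsilon,\kappa,\beta}\ x^{-\beta}\ \longrightarrow\ 0
\qquad(x\to\infty),
\]
uniformly in $M$. This contribution already goes to $0$ before taking $M\to\infty$.

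For the low range I would apply the first bound in Lemma~\ref{thm:boundofam}, with some fixed choice (say $\delta=1/2$) so that $P_m(x)\ll_{\varepsilon,\kappa}m^{-3+\delta}=m^{-5/2}$ uniformly in $x$, valid for all $2\le m\le m_0$. Summing the convergent tail gives
\[
\sum_{M<m\le m_0}P_m(x)\ \ll_{\varepsilon,\kappa}\ \sum_{m>M}m^{-5/2}\ \ll\ M^{-3/2},
\]
a bound independent of $x$. Combining the two ranges,
\[
\limsup_{x\to\infty}\sum_{m>M}a_m(x)\ \ll_{\varepsilon,\kappa}\ M^{-3/2},
\]
which tends to $0$ as $M\to\infty$.

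The step requiring most care is really just bookkeeping: making sure the decomposition at $m_0=x^{1/6-\kappa}$ is consistent (so that Case~1 of Lemma~\ref{thm:boundofam} applies on $[2,m_0]$ and Theorem~\ref{lem:Px-large-o1} applies on $(m_0,\infty)$), and that the constants implicit in the two bounds are independent of $M$. The only genuine analytic content — the uniform pointwise control on $P_m(x)$ through sixth-moment/large-sieve estimates — has already been packaged into Lemma~\ref{thm:boundofam} and Theorem~\ref{lem:Px-large-o1}; once those are invoked, the proof is a short deterministic splitting argument.
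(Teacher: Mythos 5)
Your proof is correct and follows essentially the same route as the paper: you dominate $a_m(x)$ by $P_m(x)$ via the observation that $f_\varepsilon(p)>m$ forces $S_m(p)\ge\varepsilon m$, split at $m_0=x^{1/6-\kappa}$, apply the sixth-moment bound \eqref{eq:am-smallm} on the low range and Theorem~\ref{lem:Px-large-o1} on the high range, and then take the iterated limit. The only cosmetic difference is that you fix $\delta=1/2$ explicitly; the paper leaves $\delta$ as a free small parameter, but the conclusion and structure are identical.
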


\begin{proof}
Fix $\kappa\in(0,1/6)$ and choose $\delta\in(0,1)$ satisfying the condition given in the proof of Theorem \ref{lem:Px-large-o1}. For any fixed $M$, take $x$ sufficiently large so that
$M<x^{1/6-\kappa}$ and thus
\[
\sum_{m>M} a_m(x)=\sum_{M<m\le x^{1/6-\kappa}} a_m(x)\;+\;\sum_{m>x^{1/6-\kappa}} a_m(x).
\]
Note that if $f_\eps(p)>m$, then $S_m(p)\ge \eps m$. So we have a trivial inequality $a_m(x)\leq P_m(x)$. By \eqref{eq:am-smallm}, the first sum is bounded by
\[
\sum_{M<m\le x^{1/6-\kappa}} a_m(x)\ll_{\eps,\kappa,\delta}\sum_{m>M} m^{-3+\delta}.
\]
The second sum is $o(1)$ by Theorem~\ref{lem:Px-large-o1}. Hence
\[
\limsup_{x\to\infty}\sum_{m>M} a_m(x)\ll_{\eps,\kappa,\delta}\sum_{m>M} m^{-3+\delta}.
\]
Letting $M\to\infty$ gives the claim.
\end{proof}

We are now ready to present the following.

\begin{proof}[Proof of Theorem~\ref{thm:intro:first}]
We first show that $\sum_{m\ge 0} \widehat{a}_m<\infty$. In fact, we fix suitable constants $\kappa\in(0,1/6)$ and $\delta\in(0,1)$. For each fixed $m>0$, taking $x$ sufficiently large so that
$m\le x^{1/6-\kappa}$ and using \eqref{eq:am-smallm} with the trivial inequality $a_m(x)\leq P_m(x)$, we obtain
\[
\widehat{a}_m=\lim_{x\to\infty}a_m(x)\le \sup_{x\ge m^{6/(1-6\kappa)}} a_m(x)\ll_{\eps,\kappa,\delta} m^{-3+\delta}.
\]
Then the convergence of the series $\sum_{m\ge 0} \widehat{a}_m$ follows. We write $c_\eps :=\sum_{m\ge 0} \widehat{a}_m$.

Fix $M\ge 0$. Splitting \eqref{eq:avgFx} gives
\[
\frac{1}{\piodd(x)}\sum_{p\le x} f_\eps(p)=\sum_{m=0}^{M} a_m(x)\;+\;\sum_{m>M} a_m(x).
\]
Taking $\liminf_{x\to\infty}$ and using nonnegativity yields
\[
\liminf_{x\to\infty}\frac{1}{\piodd(x)}\sum_{p\le x} f_\eps(p)\ge \sum_{m=0}^{M} \widehat{a}_m.
\]
Letting $M\to\infty$ gives
\[
\liminf_{x\to\infty}\frac{1}{\piodd(x)}\sum_{p\le x} f_\eps(p)\ge \sum_{m\ge 0} \widehat{a}_m=c_\eps.
\]

For the $\limsup$, Proposition~\ref{prop:UI} implies that for every $\eta>0$ there exists $M$ such that
$\limsup_{x\to\infty}\sum_{m>M} a_m(x)\le \eta$. Hence
\[
\limsup_{x\to\infty}\frac{1}{\piodd(x)}\sum_{p\le x} f_\eps(p)
\le \lim_{x\to\infty}\sum_{m=0}^{M} a_m(x)+\eta
= \sum_{m=0}^{M} \widehat{a}_m+\eta
\le c_\eps+\eta.
\]
Since $\eta>0$ is arbitrary, we get $\limsup\le c_\eps$. Combining with the $\liminf$ gives
\[
\lim_{x\to\infty}\frac{1}{\piodd(x)}\sum_{p\le x} f_\eps(p)=c_{\varepsilon}.
\]
This implies that $\sum_{p\le x} f_\eps(p)\sim c_\eps\,\frac{x}{\log x}$ by the prime number theorem.
\end{proof}

\section*{Acknowledgements}
The authors thank Terence Tao for valuable comments on the Erd\H{o}s Problems website,
which significantly improved an earlier version of this paper. They thank Igor Shparlinski
for pointing out that \cite{BKS21} proves a stronger result than Lemma~\ref{lem:C3m-bound}. They are also grateful to Thomas Bloom for founding and maintaining the Erd\H{o}s Problems website.


\begin{thebibliography}{99}

\bibitem{EP981}
T. F. Bloom, Erd\H{o}s Problem \#981, \url{https://www.erdosproblems.com/981}, accessed 2025-12-22.


\bibitem{BKS21}
R.~de~la~Bret\`eche, P.~Kurlberg, and I.~E.~Shparlinski,
On the number of products which form perfect powers and discriminants of multiquadratic extensions,
\emph{Int.\ Math.\ Res.\ Not.} \textbf{2021} (2021), no.~22, 17140--17169.




\bibitem{Er65b}
P. Erd\H{o}s, Some recent advances and current problems in number theory, \emph{Lectures on Modern Mathematics}, Vol. III (1965), 196--244.



\bibitem{El69}
P.~D.~T.~A.~Elliott, A conjecture of Erd\H{o}s concerning character sums,
\emph{Nederl.\ Akad.\ Wetensch.\ Proc.\ Ser.\ A} \textbf{72} = \emph{Indag.\ Math.} \textbf{31} (1969), 164--171.


\bibitem{HW}
G.~H.~Hardy and E.~M.~Wright, \emph{An Introduction to the Theory of Numbers}, 6th ed., Oxford University Press, 2008.

\bibitem{HB95}
D. R. Heath-Brown, A mean value estimate for real character sums, \emph{Acta Arith.} \textbf{72(3)} (1995), 235--275.

\bibitem{HL}
A.~Hussain and Y.~Lamzouri, The limiting distribution of Legendre paths, \emph{J. \'{E}c. polytech. Math.} \textbf{11} (2024), 589--611.

\bibitem{IK}
H. Iwaniec and E. Kowalski, \emph{Analytic Number Theory}, Amer. Math. Soc. Colloq. Publ., Vol. 53, AMS, Providence, RI, 2004.

\bibitem{La24}
Y.~Lamzouri, The distribution of large quadratic character sums and applications, \emph{Algebra \& Number Theory} \textbf{18} (2024), no.~11, 2091--2131.

\bibitem{MoVa79}
H.~L. Montgomery and R.~C. Vaughan, Mean values of character sums, \emph{Canad. J. Math.} \textbf{31} (1979), 476--487.

\bibitem{MV}
H.~L.~Montgomery and R.~C.~Vaughan, \emph{Multiplicative Number Theory I: Classical Theory}, Cambridge Studies in Advanced Mathematics \textbf{97}, Cambridge University Press, 2007.

\end{thebibliography}
\end{document}